\newcommand{\epn}{\operatorname{epn}}
\newtheorem{remark}{Remark}[section]
\newtheorem{theorem}[remark]{Theorem}
\title{A note on double domination in graphs}
\author{Abel Cabrera Mart\'inez, Juan A. Rodr\'{\i}guez-Vel\'{a}zquez\\
{\small Universitat Rovira i Virgili, } \\ 
{\small Departament d'Enginyeria Inform\`atica i Matem\`atiques,} \\ {\small Av. Pa\"{\i}sos
Catalans 26, 43007 Tarragona, Spain.} \\{\small
  abel.cabrera\@@urv.cat, juanalberto.rodriguez\@@urv.cat}
}
\date{ }
\begin{document}
\maketitle

\begin{abstract}
Recently, Haynes, Hedetniemi and  Henning published the book 
\emph{Topics in Domination in Graphs}, which comprises 16 contributions that present advanced topics in graph domination, featuring open problems, modern techniques, and recent results. One of these contributions is the chapter \emph{Multiple Domination}, by Hansberg and Volkmann, where they put into context all relevant research results on multiple domination that have been found up to 2020. In this note,  we show how to improve some results on double domination that are included in the book.
\end{abstract}

{\it Keywords}:
Double domination; double dominating set;  dominating set;  independent set; vertex cover.

\section{Introduction}

Given a simple graph $G$,  the \emph{open neighbourhood} of a vertex $v\in V(G)$ is defined to be $N(v)=\{u \in V(G):\; u \text{ is adjacent to } v\}$, and the \emph{closed neighbourhood} of $v$ is $N[v]=N(v)\cup\{v\}$. 

Harary and Haynes \cite{MR1401362,Harary2000}  introduced, in two different papers published in
1996 and 2000, the concept of double domination and, more generally, the concept of $k$-tuple
domination. Given a positive integer $k$, a subset $D\subseteq  V (G)$ is said to be a $k$-\emph{tuple dominating set} if 
$|D\cap N[v]|\ge k$
for every  vertex $v\in V (G)$. 
 Of course, this definition requires that the graph in question has minimum degree at
least $k-1$.  

The $k$-\emph{tuple domination number}  $\gamma_{\times k}(G)$  is defined to be $$\gamma_{\times k}(G)=\min\{|D|:\, D \text{ is a  } k \text{-tuple dominating set}\}.$$ 
  The case $k=2$ corresponds to double domination,  and $\gamma_{\times 2}(G)$ is known as the \emph{double domination number}, while for $k=1$, the $1$-tuple dominating sets are the classical \emph{dominating sets}, and hence the \emph{domination number} is 
$\gamma(G) =\gamma_{\times 1}(G)$.

For a comprehensive survey on $k$-tuple domination in graphs, we cite the book \emph{Topics in Domination
in Graphs}, published in 2020, which was edited by Haynes, Hedetniemi and  Henning \cite{Book-TopicsDom-2020}.  
In particular, there is a  chapter, \emph{Multiple Domination}, by Hansberg and  Volkmann, where they
put into context all relevant research results on multiple domination concerning $k$-domination, $k$-tuple domination, and total $k$-domination that have been found up to 2020. In this note, we show how to improve some of the results on double domination that are included in the book.

\section{Upper bounds on the double domination number}

We begin with the following definitions.
We define a  $\gamma(G)$-set as a dominating set $D$
with $|D|=\gamma(G)$. The same agreement will be assumed for optimal parameters associated to other characteristic sets defined in the paper.

Recall that a set of vertices of a graph, no two of which are adjacent, is called an \emph{independent set}. The \emph{independence number} $\alpha(G)$ is defined to be 
$$\alpha(G)=\max\{|S|:\, S \text{ is an independent  set}\}.$$ 
A set $S\subseteq V(G)$ which is both dominating and independent is called an \emph{independent dominating set}. Moreover, the \emph{independent domination number} $i(G)$ is
$$i(G)=\min\{|S|:\, S \text{ is an independent dominating set}\}.$$

In 2007, Blidia, Chellali  and Favaron \cite{BCF2007} proved the following relationship between the double domination number, the independent domination number and the independence number of a graph.

\begin{theorem} {\rm \cite{BCF2007}} \label{teo-BCF2007-alpha-i}
For any graph  $G$  with no isolated vertex,
$$\gamma_{\times 2}(G)\leq \alpha(G)+i(G).$$
\end{theorem}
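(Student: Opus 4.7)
The plan is to construct a double dominating set $D$ of size at most $\alpha(G) + i(G)$ directly from an $\alpha(G)$-set $S$ and an $i(G)$-set $I$. My starting point is the union $D_0 := S \cup I$, whose cardinality is $|S| + |I| - |S \cap I|$. I would then check, case by case, when a vertex $v$ can fail $|D_0 \cap N[v]| \geq 2$. If $v \in S \setminus I$, then $v \in D_0$, and since $I$ dominates $v$ and $v \notin I$, some neighbor of $v$ lies in $I \subseteq D_0$; the symmetric argument handles $v \in I \setminus S$. If $v \notin D_0$, the maximality of $S$ and the dominating property of $I$ each supply a neighbor of $v$ in $S \cup I$, and the only way to obtain fewer than two is for these to coincide, which forces the common vertex into $A := S \cap I$. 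Finally, every $u \in A$ is isolated in $G[D_0]$ since both $S$ and $I$ are independent, so $|D_0 \cap N[u]| = 1$. Thus $D_0$ fails exactly at the vertices of $A$ and at certain ``lonely'' vertices outside $D_0$ whose unique $D_0$-neighbor lies in $A$.

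To repair both sources of failure simultaneously, I would pick, for each $u \in A$, a neighbor $\xi_u \in N(u)$, which is possible because $G$ has no isolated vertex, and set $D := D_0 \cup \{\xi_u : u \in A\}$. Since $N(u) \cap (S \cup I) = \emptyset$ for $u \in A$, each $\xi_u$ lies outside $D_0$, so $|D| \leq (|S| + |I| - |A|) + |A| = \alpha(G) + i(G)$. The added vertices immediately supply a second dominator for every $u \in A$, and each $\xi_u$ is itself doubly dominated by $\xi_u$ and $u$. The only remaining case is the lonely outside vertices, namely the elements of the sets $B_u := \{w \in V(G) \setminus (S \cup I) : N(w) \cap (S \cup I) = \{u\}\}$ for $u \in A$.

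The main obstacle is that $|B_u|$ can exceed one, while adding the single vertex $\xi_u$ a priori supplies a second dominator only for $\xi_u$ itself, not for other members of $B_u$. The key lemma that resolves this is that each $B_u$ induces a clique in $G$: if $w_1, w_2 \in B_u$ were non-adjacent, then $(S \setminus \{u\}) \cup \{w_1, w_2\}$ would be an independent set of $G$ of size $\alpha(G) + 1$, since $N(w_i) \cap S \subseteq N(w_i) \cap (S \cup I) = \{u\}$ implies $w_i$ has no neighbor in $S \setminus \{u\}$, contradicting the maximality of $S$. With this lemma in hand, I would choose $\xi_u \in B_u$ whenever $B_u$ is non-empty; then any $w \in B_u$ distinct from $\xi_u$ is adjacent to $\xi_u$ by the clique property, giving $\{u, \xi_u\} \subseteq D \cap N[w]$. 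This completes the verification that $D$ is a double dominating set and yields $\gamma_{\times 2}(G) \leq |D| \leq \alpha(G) + i(G)$.
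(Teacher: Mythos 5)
Your proof is correct, and it is essentially the argument the paper uses for the stronger Theorem \ref{teo-alpha-gamma} (from which the paper derives this statement as a corollary, since $\gamma(G)\le i(G)$): augment $S\cup I$ by one extra vertex per element of $S\cap I$, chosen inside the external private neighbourhood when that is non-empty, and use the key fact that external private neighbourhoods with respect to a maximum independent set induce cliques. Note that your argument uses the independence of $I$ only incidentally (to conclude that \emph{every} vertex of $S\cap I$ needs repair), so with trivial changes it already proves the stronger bound $\gamma_{\times 2}(G)\le \alpha(G)+\gamma(G)$.
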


As shown in \cite{BCF2007}, the bound above is tight. Even so, the next result shows that, in a certain sense, Theorem \ref{teo-BCF2007-alpha-i} has room for improvement. 
 In fact,
 this can be seen as a corollary of Theorem \ref{teo-alpha-gamma}, as $\gamma(G)\le i(G)$.

\begin{theorem}\label{teo-alpha-gamma}
For any graph  $G$  with no isolated vertex,
$$\gamma_{\times 2}(G)\leq \alpha(G)+\gamma(G).$$
\end{theorem}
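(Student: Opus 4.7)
The plan is to construct a double dominating set by starting from a $\gamma(G)$-set $D$, enlarging it by a carefully chosen independent set, and patching what remains with a controlled number of extra vertices. Specifically, I would let $D$ be any $\gamma(G)$-set and let $I$ be a maximum independent set of the induced subgraph $G[V(G)\setminus D]$; then $I$ is independent in $G$, $I\cap D=\emptyset$, $|I|\le\alpha(G)$, and by the maximality of $I$ every vertex of $V(G)\setminus(I\cup D)$ has a neighbor in $I$.

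The first question I would address is whether $I\cup D$ is already a double dominating set. For a vertex $v\in I$, since $v\notin D$ the domination property of $D$ gives a distinct neighbor of $v$ in $D$, so $|N[v]\cap(I\cup D)|\ge 2$; for a vertex $v\in V(G)\setminus(I\cup D)$, maximality of $I$ in $G[V(G)\setminus D]$ gives a neighbor of $v$ in $I$, while domination of $v$ by $D$ gives a neighbor in $D$, and these two neighbors are distinct because $I\cap D=\emptyset$. The only case in which $I\cup D$ may fail is a vertex $v\in D$ with $N(v)\cap(I\cup D)=\emptyset$; I would denote the set of such problematic vertices by $P$.

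The main obstacle is the counting argument that patches $P$ without breaking the budget $\alpha(G)+\gamma(G)$. The key observation will be that $I\cup P$ is an independent set of $G$: $I$ is independent by construction, no $v\in P$ has a neighbor in $I$ by the definition of $P$, any two vertices of $P$ lie in $D$ and have no neighbor in $D$ (again by the definition of $P$) so are non-adjacent, and finally $P\cap I=\emptyset$ because $P\subseteq D$. This yields the crucial inequality $|I|+|P|\le\alpha(G)$.

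To conclude, for each $v\in P$ I would pick a neighbor $g(v)\in V(G)\setminus(I\cup D)$ of $v$ (which exists because $G$ has no isolated vertex and all neighbors of $v$ lie outside $I\cup D$ by the definition of $P$) and set $S:=I\cup D\cup\{g(v):v\in P\}$. The bound
$$|S|\le |I|+|D|+|P|\le \alpha(G)+\gamma(G)$$
then follows, and a short verification shows that $S$ is double dominating: each $g(v)$ supplies the missing second neighbor for the corresponding $v\in P$, each added $g(v)\in S$ is doubly covered by itself and by $v\in D\subseteq S$, and the previously handled cases remain unchanged. The whole argument really rests on the single non-routine step, the independence of $I\cup P$, which converts the naive estimate $|I|+|D|+|P|$ into the desired bound.
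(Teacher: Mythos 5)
Your argument is correct, and it takes a genuinely different route from the paper's. The paper starts from a maximum independent set $S$ of $G$ together with a $\gamma(G)$-set $D$, and augments $S\cup D$ by at most one vertex per element of $S\cap D$; the correctness of that patch rests on the observation that, because $S$ is a \emph{maximum} independent set, the external private neighbourhood $\epn(u,S)$ of any $u\in S$ induces a clique, so a single added private neighbour doubly dominates all the others. You instead take your independent set $I$ inside $G-D$, so that $I\cap D=\varnothing$ and the only problematic vertices are those $v\in D$ with no neighbour in $I\cup D$; your key lemma is that $I\cup P$ is independent in $G$, which lets the $|P|$ patch vertices be charged against the $\alpha(G)$ budget rather than against $|D|$. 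Your route buys a more elementary argument (no clique structure of private neighbourhoods is needed) and in fact requires only that $I$ be \emph{maximal} in $G[V(G)\setminus D]$, not maximum, since maximality is all you use to dominate $V(G)\setminus(I\cup D)$ and the bound $|I|+|P|\le\alpha(G)$ comes from independence of $I\cup P$ alone; the paper's route, by contrast, produces a set containing a full $\alpha(G)$-set, which is what makes their tightness examples transparent. All the verifications you sketch go through: the vertices $g(v)$ exist because $G$ has no isolated vertex and all neighbours of $v\in P$ lie outside $I\cup D$, and each $g(v)$ is itself doubly covered since it is adjacent to $v\in D$ and lies in $S$.
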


\begin{proof}
Let $S$ be an $\alpha(G)$-set and $D$ a $\gamma(G)$-set. 
Given a  vertex $u\in S\cup D$, the external private neighbourhood of $u$ with respect to $S\cup D$ is defined to be $$\epn(u,S\cup D)=\{v\in V(G)\setminus (S\cup D): \, N(v)\cap (S\cup D)=\{u\}\}.$$  
Note that if $u\in S$, then  $\epn(u,S\cup D)\subseteq \epn(u,S)$.

Now, we define  $W'\subseteq V(G)$ as a set of minimum cardinality among all supersets  $W$ of $S\cup D$ such that   the following properties are satisfied.  
\begin{itemize}
\item[(a)] If $x\in S\cap D$ and $\epn(x, S\cup D)\ne  \varnothing $, then $W\cap \epn(x, S\cup D)\neq \varnothing $.
\item[(b)] If $x\in S\cap D$ and $\epn(x, S\cup D)= \varnothing $, then  $ W\cap N(x)\ne \varnothing $.
\end{itemize}
By definition,  
$W'$ is a dominating set of $G$
whose induced subgraph has no isolated vertex
and $|W'|\leq |S|+|D|$.   To conclude that $W'$ is a double dominating set of $G$, we only need to prove that $|N(v)\cap W'|\ge 2$ for every    $v\in V(G)\setminus W'$. Suppose to the contrary that there exists $v\in V(G)\setminus W'$  such that $v\in \epn(u,W')$ for some vertex $u\in W'$. Now, since $S$ and $D$ are dominating sets of $G$, we have that $u\in S\cap D$. By (a), there exists $v'\in W'\cap \epn(u, S\cup D)$.  Since $v,v'\in \epn(u,S)$ and $\epn(u,S)$ forms a clique, vertex  $v$ has at least two neighbours  $u,v'\in W'$,  which is a contradiction. Therefore,  $W'$ is a double dominating set of $G$ and so,  $\gamma_{\times 2}(G)\leq |W'|\leq |S|+|D|= \alpha(G)+\gamma(G)$.
\end{proof}

Let $\mathcal{H}$ be the family of graphs $G_{t,r}$ defined as follows. For any pair of integers $t, r\in \mathbb{Z}$, with $t\ge 2$ and $1\le  r\le t-1$, the graph $G_{t,r}$ is obtained from two different copies of a star  $T_1\cong T_2\cong K_{1,t}$ such that   $V(G_{t,r})=V(T_1)\cup V(T_2)$,  $V(T_1)=\{u, v_1,\dots, v_t\}$ and $V(T_2)=\{u', v_1',\dots, v_t'\}$,  $u$ and $u'$  are the support vertices of the stars, and $E(G_{t,r})=E(T_1)\cup E(T_2)\cup \{uu',v_1v_1',\dots, v_rv_r'\}.$ 	
Figure \ref{figure} shows a graph of this family. 
Observe that $\gamma_{\times 2}(G_{t,r})=2t-r+2$, $\gamma(G_{t,r})=2$, $i(G_{t,r})=t+1$ and $\alpha(G_{t,r})=2t-r$  for every $G_{t,r}\in \mathcal{H}$. Therefore, for these graphs Theorem \ref{teo-alpha-gamma} gives the exact value $\gamma_{\times 2}(G_{t,r})=\alpha(G_{t,r})+\gamma(G_{t,r})$, while Theorem \ref{teo-BCF2007-alpha-i} gives $\gamma_{\times 2}(G_{t,r})\le \alpha(G_{t,r})+\gamma(G_{t,r})+t-1=\alpha(G_{t,r})+i(G_{t,r})$.

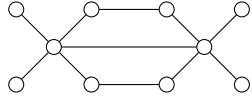
\begin{figure}[ht]
\centering
\begin{tikzpicture}[scale=.5, transform shape]
\node [draw, shape=circle] (c1) at  (0,0) {};
\node [draw, shape=circle] (h1) at  (-1,-1) {};
\node [draw, shape=circle] (h2) at  (1,1) {};
\node [draw, shape=circle] (h3) at  (-1,1) {};
\node [draw, shape=circle] (h4) at  (1,-1) {};
\node [draw, shape=circle] (c2) at  (4,0) {};
\node [draw, shape=circle] (hh1) at  (5,-1) {};
\node [draw, shape=circle] (hh2) at  (3,1) {};
\node [draw, shape=circle] (hh3) at  (5,1) {};
\node [draw, shape=circle] (hh4) at  (3,-1) {};
\draw (h4)--(c1)--(c2);
\draw (c1)--(h1);
\draw (c1)--(h2);
\draw (c1)--(h3);
\draw (c2)--(hh1);
\draw (c2)--(hh2);
\draw (h4)--(hh4)--(c2)--(hh3);
\draw (h2)--(hh2);
\end{tikzpicture}
\caption{A graph $G_{4,2}\in \mathcal{H}$.}\label{figure}
\end{figure}

A set $S$ of vertices of $G$ is a \emph{vertex cover} if every edge of $G$ is incident with at least one vertex in $S$. The \emph{vertex cover number} of $G$, denoted by $\beta(G)$, is defined as
$$\beta(G)=\min\{|S|:\, S \text{ is a vertex cover}\}.$$

\begin{theorem}\label{teo-consequence-Cabrera-DOIDS-2020}
Let $G$ be a graph with no isolated vertex of order $n\geq 3$. Let $\mathcal{L}(G)$ and $\mathcal{S}(G)$ be the set of leaves and support vertices of $G$, respectively. Then  $$\gamma_{\times 2}(G)\leq \beta(G) +\gamma(G)+|\mathcal{L}(G)|-|\mathcal{S}(G)|.$$
\end{theorem}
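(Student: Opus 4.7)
The plan is to construct, from carefully chosen optimal sets, a double dominating set $W$ with $|W|\le \beta(G)+\gamma(G)+|\mathcal{L}(G)|-|\mathcal{S}(G)|$. I would first fix a $\beta(G)$-set $S$ and a $\gamma(G)$-set $D$. A standard swap (for a support vertex $s$ outside the set, replace one of its leaves by $s$) lets me assume $\mathcal{S}(G)\subseteq S\cap D$; once this holds, minimality forces $(S\cup D)\cap\mathcal{L}(G)=\varnothing$, as any leaf in $S$ (respectively $D$) could be dropped, its unique edge being covered (respectively its unique neighbour dominating it) by the support already present.

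Take $W_0=S\cup D\cup\mathcal{L}(G)$ as a first candidate. The two assumptions and inclusion--exclusion yield
$$|W_0|=|S|+|D|+|\mathcal{L}(G)|-|S\cap D|\le \beta(G)+\gamma(G)+|\mathcal{L}(G)|-|\mathcal{S}(G)|,$$
with slack exactly $|S\cap D|-|\mathcal{S}(G)|$. A short case analysis verifies that $W_0$ already doubly dominates every leaf (its support lies in $W_0$), every support vertex (it has a leaf in $W_0$), and every vertex $v\in V(G)\setminus(S\cup\mathcal{L}(G))$: since $V(G)\setminus S$ is independent, every neighbour of $v$ lies in $S\subseteq W_0$, and $\deg(v)\ge 2$ because $v$ is not a leaf. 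The only potential failures are the vertices $v\in S\setminus\mathcal{S}(G)$ with $N(v)\cap(S\cup D)=\varnothing$.

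The key observation is that such a $v$ must itself lie in $D$: indeed $D$ dominates $v$ and $N(v)\cap D=\varnothing$, so $v\in D$. Consequently, the set $B$ of potential failures is contained in $(S\cap D)\setminus\mathcal{S}(G)$, whose cardinality equals the slack computed above. For each $v\in B$ I would choose an arbitrary neighbour $u_v$; note that $u_v$ exists and lies outside $W_0$, because $N(v)\cap W_0=\varnothing$. Setting $W=W_0\cup\{u_v:v\in B\}$ then gives
$$|W|\le |W_0|+|B|\le \beta(G)+\gamma(G)+|\mathcal{L}(G)|-|\mathcal{S}(G)|,$$
and each $u_v\in W$ doubly dominates the corresponding $v$, while no previously double-dominated vertex is disturbed. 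Hence $W$ is a double dominating set of the required size.

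The main conceptual obstacle is the accounting that makes the correction fit inside the slack: the step forcing each problematic $v$ to belong to $D$, rather than only to $S$, is what ties the number of corrections to the overlap $|S\cap D|-|\mathcal{S}(G)|$ and makes the single extra neighbour per failure exactly payable by the double counting already present in $|S|+|D|$. Once this link is in place, the remaining verifications are routine.
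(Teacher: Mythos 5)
Your proof is correct and follows essentially the same route as the paper's: both fix a $\beta(G)$-set $S$ and a $\gamma(G)$-set $D$ with $\mathcal{S}(G)\subseteq S\cap D$, start from $S\cup D\cup\mathcal{L}(G)$, and pay for one extra neighbour per deficient vertex of $(S\cap D)\setminus\mathcal{S}(G)$ out of the slack $|S\cap D|-|\mathcal{S}(G)|$. Your version merely makes explicit (via the observation that a deficient vertex of $S$ must also lie in $D$) the accounting that the paper leaves implicit in its minimum-superset construction.
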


\begin{proof}
Let $S$ be a $\beta(G)$-set and $D$ a $\gamma(G)$-set such that $\mathcal{S}(G)\subseteq S\cap D$. Let us define  $W'\subseteq V(G)$ as a set of minimum cardinality among all sets $W$ satisfying the following properties.

\begin{itemize}
\item $S\cup D\cup \mathcal{L}(G)\subseteq W$.
\item If $v\in (S\cap D)\setminus \mathcal{S}(G)$, then $N(v)\cap W\ne \varnothing$. 
\end{itemize} 
Since $S\cup \mathcal{L}(G)\subseteq W'$, we conclude that  every vertex outside $W'$ is adjacent to at least two vertices in $W'$.   Now, let $x\in W'$. If $x\in S\cap D$, then $N(x)\cap W'\neq \varnothing$ by definition. If $x\in (S\cup D)\setminus (S\cap D)$, then $N(x)\cap W'\neq \varnothing$, as $S$ and $D$ are dominating sets of $G$.  Therefore, $W'$ is a double dominating set of $G$, and since  $\mathcal{S}(G)\subseteq S\cap D$ we obtain that $\gamma_{\times 2}(G)\leq |W'| \leq |S|+|D|+|\mathcal{L}(G)|-|\mathcal{S}(G)|=\beta(G)+\gamma(G)+|\mathcal{L}(G)|-|\mathcal{S}(G)|$, which completes the proof. 
\end{proof}

The bound above is tight. For instance, it is achieved by any graph $G_{t,r}\in \mathcal{H}$, where $\mathcal{H}$ is the family of graphs defined after the proof of Theorem \ref{teo-alpha-gamma}. In this case,
$\gamma_{\times 2}(G_{t,r})=2t-r+2$, $\gamma(G_{t,r})=|\mathcal{S}(G_{t,r})|=2$, $|\mathcal{L}(G_{t,r})|=2(t-r)$ and $\beta(G_{t,r})=r+2$  for every $G_{t,r}\in \mathcal{H}$.

In 2000, Harary and Haynes \cite{Harary2000} gave the following upper bound on the double domination number of a graph in terms of the order and the domination number.

\begin{theorem}{\rm \cite{Harary2000}}\label{teo-HH2000}
For any graph  $G$ of order $n$ and minimum degree $\delta(G)\geq 2$, 
$$\gamma_{\times 2}(G)\leq \left\{ \begin{array}{ll}
             \lfloor \frac{n}{2} \rfloor + \gamma(G), & \text{if } \, n=3,5\\[5pt]
             \lfloor \frac{n}{2} \rfloor + \gamma(G)-1, & \mbox{otherwise.}
                                  \end{array}\right.$$
                                  Furthermore, the bound is tight.
\end{theorem}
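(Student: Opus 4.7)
The plan is to establish the bound by an explicit construction of a double dominating set that extends a minimum dominating set of $G$, using $\delta(G)\ge 2$ to control the number of augmenting vertices.

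First I would fix a $\gamma(G)$-set $D$; by Ore's theorem, $|D|\le \lfloor n/2\rfloor$, which already suggests that $V(G)\setminus D$ contains enough room to house the augmentation. I would identify the vertices currently not doubly covered by $D$: the set $A=\{v\in D:N(v)\cap D=\varnothing\}$ and the set $B=\{v\in V(G)\setminus D:|N(v)\cap D|=1\}$. A vertex of $A$ needs at least one neighbor added to the double dominating set, and a vertex of $B$ needs either itself or a second neighbor added. Because $\delta(G)\ge 2$, each $v\in A$ has at least two neighbors in $V(G)\setminus D$, and each $v\in B$ has at least one neighbor in $V(G)\setminus D$ besides its unique neighbor in $D$, so a valid augmenting set $T\subseteq V(G)\setminus D$ always exists, and $D\cup T$ is readily checked to be a double dominating set.

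Next I would formulate the choice of $T$ as a hypergraph cover problem: the ground set is $V(G)\setminus D$, and for each $a\in A$ there is a hyperedge $N(a)$, while for each $b\in B$ there is a hyperedge $\{b\}\cup (N(b)\setminus D)$; each hyperedge has size at least $2$. The goal is to find a transversal $T$ of size at most $\lfloor n/2\rfloor-1$ for $n\notin\{3,5\}$, giving $\gamma_{\times 2}(G)\le |D|+|T|\le \gamma(G)+\lfloor n/2\rfloor-1$. Double-counting the edges between $D$ and $V(G)\setminus D$, together with a matching-based selection that consolidates hyperedges sharing a common vertex, should yield the estimate; the key point is that each added vertex can simultaneously cover requirements from multiple members of $A\cup B$. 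To obtain the sharp constant, one may also wish to select $D$ among all $\gamma(G)$-sets so as to minimize $|A|$, and to exploit any shared incidences between $N(a)$ and $N(b)\setminus D$.

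The main obstacle is squeezing out the final $-1$ in the generic case. A naive greedy argument produces a transversal of size at most $\lfloor n/2\rfloor$, matching only the weaker bound $\gamma_{\times 2}(G)\le \lfloor n/2\rfloor+\gamma(G)$; the saving of one vertex requires showing that at least one pair of hyperedges can always be covered by a single vertex when $n\notin\{3,5\}$. For the exceptional values, this redundancy may genuinely fail: if $n=3$ and $\delta(G)\ge 2$ then $G=K_3$, and for $n=5$ the admissible graphs can be checked one by one. I would close the argument by exhibiting tightness with a small family such as the cycles $C_4$ and $C_6$, to confirm that no further improvement is available in general.
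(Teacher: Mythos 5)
This theorem is quoted by the paper from Harary and Haynes \cite{Harary2000}; the note contains no proof of it, so there is nothing internal to compare your argument against. Judged on its own terms, your proposal has a genuine gap, and it sits exactly where you place it yourself: the entire content of the theorem is the claim that the augmenting set $T$ can be chosen with $|T|\le \lfloor n/2\rfloor-1$ (resp.\ $\lfloor n/2\rfloor$ for $n\in\{3,5\}$), and you never establish this. The phrase ``double-counting \dots together with a matching-based selection \dots should yield the estimate'' is a hope, not an argument. Worse, even the intermediate assertion that ``a naive greedy argument produces a transversal of size at most $\lfloor n/2\rfloor$'' is unjustified in your framing: the ground set $V(G)\setminus D$ has size $n-\gamma(G)\ge \lceil n/2\rceil$, the number of hyperedges $|A|+|B|$ can be close to $n$ (e.g.\ for $C_n$ with $3\mid n$ one gets $|A|+|B|=n$), and the only bound that follows immediately from ``every hyperedge has size at least $2$'' is $|T|\le |V(G)\setminus D|-1=n-\gamma(G)-1$, which gives $\gamma_{\times 2}(G)\le n-1$ — weaker than the target whenever $\gamma(G)<\lceil n/2\rceil$. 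So the reduction to a hypergraph transversal problem is a correct reformulation (your verification that $D\cup T$ is a double dominating set for any transversal $T$ is fine), but it does not by itself bring you any closer to the stated bound; the quantitative heart of the proof is missing.

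To salvage the approach you would need a structural lemma about this particular hypergraph — for instance, exploiting that the hyperedges indexed by $B$ contain their own index vertex, or choosing $D$ to minimize $|A|$ and pairing elements of $A\cup B$ so that each selected vertex of $T$ covers at least two requirements on average — and you would need to see why $n=3,5$ are precisely the cases where one unit cannot be saved (note that $C_3$ and $C_5$ satisfy $\gamma_{\times 2}=n$, forcing the weaker bound there). None of this is routine, and the exceptional-case analysis for $n=5$ and the tightness examples you mention do not substitute for the main estimate. As written, the proposal is a plausible plan with the decisive step left open.
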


We proceed to show that Theorems \ref{teo-alpha-gamma} and  \ref{teo-consequence-Cabrera-DOIDS-2020} allow us to improve the bound given in Theorem \ref{teo-HH2000}. To this end, we need to state the following well-known result, due to Gallai, which states the relationship between the independence number and the vertex cover number of a graph.

\begin{theorem}{\em\cite{Gallai1959}}{\rm (Gallai's theorem)}\label{th_gallai}
For any graph  $G$ of order $n$,
$$\alpha(G)+\beta(G) = n.$$
\end{theorem}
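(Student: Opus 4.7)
The plan is to exploit the complementary correspondence between independent sets and vertex covers: a subset $S\subseteq V(G)$ is independent if and only if $V(G)\setminus S$ is a vertex cover. Once this equivalence is established, the identity $\alpha(G)+\beta(G)=n$ drops out by applying it to extremal sets on each side.

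First, I would verify the equivalence directly from the definitions. If $S$ is independent, then for any edge $uv\in E(G)$ we cannot have both $u,v\in S$, so at least one endpoint lies in $V(G)\setminus S$; hence $V(G)\setminus S$ is a vertex cover. Conversely, if $C$ is a vertex cover and $u,v\in V(G)\setminus C$, then the edge $uv$ cannot exist, since otherwise it would be uncovered by $C$; so $V(G)\setminus C$ is independent.

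Second, I would derive the two matching inequalities. Taking $S$ to be an $\alpha(G)$-set, its complement is a vertex cover of cardinality $n-\alpha(G)$, yielding $\beta(G)\le n-\alpha(G)$. Symmetrically, taking $C$ to be a $\beta(G)$-set, its complement is an independent set of cardinality $n-\beta(G)$, yielding $\alpha(G)\ge n-\beta(G)$. Adding the resulting inequalities $\alpha(G)+\beta(G)\le n$ and $\alpha(G)+\beta(G)\ge n$ gives the desired equality.

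There is essentially no obstacle: the entire argument is a one-line complementation observation, which is precisely why the statement is classically attributed to Gallai. The only point worth spelling out is the involution $S\mapsto V(G)\setminus S$ between the two families, since this is what forces both parameters to be determined by each other through $n$.
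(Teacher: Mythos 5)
Your proof is correct and complete: the complementation equivalence between independent sets and vertex covers is exactly the standard argument for Gallai's theorem, and the two inequalities you extract from it combine to give the equality. The paper itself states this result only as a cited classical theorem (Gallai, 1959) without proof, so there is no in-paper argument to compare against; your write-up supplies precisely the proof one would expect.
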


By combining Theorems \ref{teo-alpha-gamma}, \ref{teo-consequence-Cabrera-DOIDS-2020} and \ref{th_gallai}, we have the next result, which clearly improves the bound given in Theorem \ref{teo-HH2000} for graphs of minimum
degree at least two and order $n\ge 6$, whenever $\alpha(G)\notin \{    \frac{n\pm 2}{2}   ,   \frac{n}{2} \}$ for $n$ even, and $\alpha(G)\notin \{   \frac{n\pm 1}{2}  ,  \frac{n \pm 3}{2} \}$ for $n$ odd.

\begin{theorem}\label{teo-improve-HH2000}
For any graph  $G$ of order $n$ and minimum degree $\delta(G)\geq 2$, 
$$\gamma_{\times 2}(G)\leq \min\{\alpha(G),n-\alpha(G)\}+\gamma(G).$$
\end{theorem}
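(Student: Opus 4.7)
The plan is to derive the theorem as a direct corollary of the two upper bounds already established (Theorems \ref{teo-alpha-gamma} and \ref{teo-consequence-Cabrera-DOIDS-2020}), combined with Gallai's identity $\alpha(G)+\beta(G)=n$. Since the right-hand side is a minimum of two quantities, each summand $\alpha(G)+\gamma(G)$ and $n-\alpha(G)+\gamma(G)$ needs to be shown as an independent upper bound on $\gamma_{\times 2}(G)$.

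First, I would observe that $\delta(G)\geq 2$ forces $G$ to have no isolated vertex and in fact no leaves, so that $\mathcal{L}(G)=\varnothing$ and consequently $\mathcal{S}(G)=\varnothing$ as well; also $n\geq 3$ automatically. Next, applying Theorem \ref{teo-alpha-gamma} immediately gives
\[
\gamma_{\times 2}(G)\leq \alpha(G)+\gamma(G).
\]
Then, applying Theorem \ref{teo-consequence-Cabrera-DOIDS-2020} and using $|\mathcal{L}(G)|=|\mathcal{S}(G)|=0$ together with Gallai's theorem $\beta(G)=n-\alpha(G)$ yields
\[
\gamma_{\times 2}(G)\leq \beta(G)+\gamma(G)=n-\alpha(G)+\gamma(G).
\]
Taking the smaller of the two bounds produces exactly $\min\{\alpha(G),n-\alpha(G)\}+\gamma(G)$, as required.

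There is essentially no obstacle here beyond verifying that the hypothesis $\delta(G)\geq 2$ is strong enough to apply both prior results. The only subtlety is noticing that $\delta(G)\geq 2$ eliminates leaves, which in turn eliminates support vertices, so the correction term $|\mathcal{L}(G)|-|\mathcal{S}(G)|$ vanishes (rather than only being nonpositive). With this observation, the proof reduces to at most a few lines invoking the three earlier theorems in sequence.
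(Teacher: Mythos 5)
Your proposal is correct and matches the paper exactly: the authors state this theorem as a direct combination of Theorems \ref{teo-alpha-gamma}, \ref{teo-consequence-Cabrera-DOIDS-2020} and \ref{th_gallai}, with no further proof given. Your observation that $\delta(G)\geq 2$ forces $\mathcal{L}(G)=\mathcal{S}(G)=\varnothing$ and $n\geq 3$ is precisely the verification needed to make that combination legitimate.
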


The bound above is tight. For instance, it is achieved by any complete graph, as $\gamma_{\times 2}(K_n)=\alpha(K_n)+\gamma(K_n)=2$, while Theorem \ref{teo-HH2000} gives   $\gamma_{\times 2}(K_n)\le   \left\lfloor \frac{n}{2} \right\rfloor $ for $n> 5$ and $\gamma_{\times 2}(K_5)\le 3$.
Now, in order to show another example where 
the difference between the bounds given by Theorems \ref{teo-improve-HH2000} and \ref{teo-HH2000} can be as large as desired,
we consider the family  $\mathcal{H}'$ of graphs defined as follows. For every integer $r\ge 2$ the graph $G_r\in \mathcal{H}'$ has order $n=3(r+1)$, as $V(G_r)=\{a_1, a_2, a_3, v_1, \dots, v_{3r}\}$,   and the edges of $G_r$ are of the form $\{v_j,a_i\}$ and  $\{v_j,a_{i+1}\}$ whenever $j\equiv i \pmod 3$. 
  In this case, 
$\gamma_{\times 2}(G_r)=5,$ Theorem  \ref{teo-improve-HH2000} gives  $\gamma_{\times 2}(G_r)\le 3(r+1)-\alpha(G_r)+\gamma(G_r)=6,$ while Theorem \ref{teo-HH2000} gives  $\gamma_{\times 2}(G_r)\le   \left\lfloor \frac{3(r+1)}{2} \right\rfloor +2$. Figure \ref{Fig2}  shows the case $r=2$. 

\begin{figure}[ht]
\centering
\begin{tikzpicture}[scale=.5, transform shape]
\node [draw, shape=circle] (a1) at  (2,0) {};
\node [draw, shape=circle] (a2) at  (3.5,0) {};
\node [draw, shape=circle] (a3) at  (5,0) {};
\node [draw, shape=circle] (v1) at  (1,2) {};
\node [draw, shape=circle] (v2) at  (2,2) {};
\node [draw, shape=circle] (v3) at  (3,2) {};
\node [draw, shape=circle] (v4) at  (4,2) {};
\node [draw, shape=circle] (v5) at  (5,2) {};
\node [draw, shape=circle] (v6) at  (6,2) {};
\draw (a1)--(v1)--(a2)--(v2)--(a3)--(v3)--(a1)--(v4)--(a2)--(v5)--(a3)--(v6)--(a1);
\node [below] at (2,-0.2) {\LARGE $a_1$};
\node [below] at (3.5,-0.2) {\LARGE $a_2$};
\node [below] at (5,-0.2) {\LARGE $a_3$};
\node [above] at (1,2.2) {\LARGE $v_1$};
\node [above] at (2,2.2) {\LARGE $v_2$};
\node [above] at (3,2.2) {\LARGE $v_3$};
\node [above] at (4,2.2) {\LARGE $v_4$};
\node [above] at (5,2.2) {\LARGE $v_5$};
\node [above] at (6,2.2) {\LARGE $v_6$};
\end{tikzpicture}
\caption{The graph $G_2\in \mathcal{H}'$.}\label{Fig2}
\end{figure}
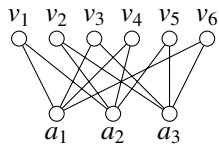

In 1985, Fink and
Jacobson \cite{MR812671,MR812672}  extended  the idea of domination in graphs  to the more general notion of $k$-domination. A subset $D\subseteq V(G)$ is said to be a $k$-dominating set if $|D\cap N(v)|\ge k$ for every vertex $v\in  V (G) \setminus  D$.  The $k$-domination number $\gamma_k(G)$ is defined to be 
$$\gamma_k(G)=\min\{|D|:\, D \text{ is a } k\text{-dominating set}\}.$$
The case $k=1$ corresponds to standard domination. 

Bonomo et al. \cite{BBGMS2018} obtained the following relationship between the double domination number and the $2$-domination number.

\begin{theorem}{\rm \cite{BBGMS2018}}\label{Bonono}
For any graph  $G$ with no isolated vertex, 
$$\gamma_{\times 2}(G)\leq 2\gamma_2(G)-1.$$
\end{theorem}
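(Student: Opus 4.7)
The plan is to exploit the fact that a $\gamma_2(G)$-set $D$ is already very close to being double dominating: every vertex outside $D$ has, by definition, at least two neighbours in $D\subseteq W$; the only obstruction is vertices $v\in D$ that have no neighbour in $D$ (equivalently, isolated vertices of the subgraph $G[D]$). So the construction will take $W=D\cup A$, where $A\subseteq V(G)\setminus D$ is a small set chosen to provide the missing in-$W$ neighbour to each such $v$. The whole game is to show that $|A|\le |D|-1$, which will yield $\gamma_{\times 2}(G)\le 2|D|-1=2\gamma_2(G)-1$.

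Let $D$ be a $\gamma_2(G)$-set and let $I=\{v\in D:N(v)\cap D=\varnothing\}$ denote the set of vertices isolated in $G[D]$. I would split into three cases according to the size of $I$. First, if $I=\varnothing$, then $D$ itself is already double dominating, so $\gamma_{\times 2}(G)\le |D|\le 2\gamma_2(G)-1$ (the last inequality uses $\gamma_2(G)\ge 2$, which holds because $G$ has no isolated vertex). Second, if $\varnothing\neq I\subsetneq D$, then for each $v\in I$ choose a neighbour $f(v)\in V(G)\setminus D$ (it exists because $G$ has no isolated vertex) and take $A=\{f(v):v\in I\}$; then $|A|\le |I|\le |D|-1$, and $W=D\cup A$ is easily checked to be double dominating.

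The core of the argument — and the only nontrivial step — is the case $I=D$, i.e.\ when $D$ is independent. Here the naive construction would give one added vertex per element of $D$ for a total of $2|D|$, one too many. To save a single vertex I would use the $2$-domination property: pick any $u\in V(G)\setminus D$; since $|N(u)\cap D|\ge 2$, there exist two distinct vertices $v_1,v_2\in D$ both adjacent to $u$. Then I would add $u$ to $A$ (handling $v_1$ and $v_2$ simultaneously) and, for each remaining $v\in D\setminus\{v_1,v_2\}$, add one arbitrary neighbour $f(v)\in V(G)\setminus D$. This produces $|A|\le 1+(|D|-2)=|D|-1$, and $W=D\cup A$ is double dominating because every vertex of $D$ now has a neighbour in $A\subseteq W$, every vertex of $A$ already has at least two neighbours in $D\subseteq W$, and every vertex of $V(G)\setminus W\subseteq V(G)\setminus D$ inherits its two neighbours in $D$ from the $2$-domination of $D$.

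The main obstacle is precisely that saving of one vertex in the independent case; the other two cases are essentially free. Conceptually, Case C is where the hypothesis ``$D$ is $2$-dominating'' (and not merely dominating) is actually used in a nontrivial way, namely to guarantee the existence of a single external vertex covering two members of $D$ at once.
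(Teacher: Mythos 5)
Your argument is correct. Note first that the paper does not prove this statement at all: it is quoted from Bonomo et al.\ \cite{BBGMS2018}, so there is no in-paper proof to compare against. Judged on its own, your case analysis on $I=\{v\in D: N(v)\cap D=\varnothing\}$ works: in each case every vertex of $V(G)\setminus W$ keeps its two $D$-neighbours, every vertex of $D$ acquires a neighbour in $W$, and every added vertex lies outside $D$ and hence already has two neighbours in $D$. The only point you should make explicit is in Case C: you need $V(G)\setminus D\neq\varnothing$ before picking $u$, which holds because if $D=V(G)$ were independent then every vertex of $G$ would be isolated; similarly $\gamma_2(G)\ge 2$ (used implicitly so that $|A|\le |D|-1$ is meaningful) follows since a singleton cannot $2$-dominate a second vertex. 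Your construction is in the same spirit as the augmentation arguments the authors use for their own Theorems \ref{teo-alpha-gamma}, \ref{2+gamma} and \ref{teo-improve-claw-free} (take an optimal set and adjoin, for each ``deficient'' vertex, one witness neighbour); the extra idea you need, and correctly supply, is the saving of one vertex in the independent case via a common external neighbour $u$ of two members of $D$, which is exactly where $2$-domination, rather than mere domination, is exploited.
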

The inequality of this theorem was proved previously for trees
in \cite{MR2418091}.
In fact, the result was stated as $\gamma_{\times 2}(T)\leq 2\gamma_2(T)-2$ for every tree $T$ of order at least four.

Since $\gamma(G)\le\gamma_2(G)$, the next result improves the bound  given in Theorem \ref{Bonono} whenever $\gamma(G)\le \gamma_2(G)-2$.

\begin{theorem}\label{2+gamma}
For any graph  $G$ with no isolated vertex,
$$\gamma_{\times 2}(G)\leq \gamma_2(G)+\gamma(G).$$
\end{theorem}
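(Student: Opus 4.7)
The plan is to mimic the strategy behind the proof of Theorem \ref{teo-alpha-gamma}, but now starting from a $\gamma_2(G)$-set rather than an $\alpha(G)$-set. Specifically, I would fix a $\gamma_2(G)$-set $A$ and a $\gamma(G)$-set $D$, and build a double dominating set $W'$ by enlarging $A\cup D$ with a small number of extra vertices so that the induced subgraph $G[W']$ has no isolated vertex.

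The key observation is that since $A$ is a $2$-dominating set, every vertex $v\in V(G)\setminus (A\cup D)$ already satisfies $|N(v)\cap A|\geq 2$, so the double-domination condition is automatic for vertices outside $A\cup D$. Thus the only remaining task is to guarantee that each vertex in $A\cup D$ has at least one neighbour in the final set. For this, I would run through the natural case split: if $v\in D\setminus A$, then $2$-domination of $A$ applied to $v\notin A$ gives $|N(v)\cap A|\geq 2\geq 1$; if $v\in A\setminus D$, then domination by $D$ forces $N(v)\cap D\neq \varnothing$; and only for $v\in A\cap D$ is it conceivable that $v$ has no neighbour in $A\cup D$. For every such problematic vertex (which exists only thanks to absence of isolated vertices in $G$, ensuring it has some neighbour at all), I would add one arbitrary neighbour of $v$ to form $W'$.

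To finish, I would verify both halves of the double-domination condition for $W'$ (vertices outside $W'$ are covered twice by $A$, vertices inside $W'$ either fall into one of the above three cases or are the newly added neighbours, which have their original partner in $A\cap D\subseteq W'$), and then bound the size:
\[
|W'|\;\leq\;|A\cup D|+|A\cap D|\;=\;|A|+|D|\;=\;\gamma_2(G)+\gamma(G).
\]
Hence $\gamma_{\times 2}(G)\leq \gamma_2(G)+\gamma(G)$.

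I do not anticipate a real obstacle here: the argument is noticeably cleaner than the proof of Theorem \ref{teo-alpha-gamma} because the $2$-domination hypothesis on $A$ replaces, and in fact strengthens, the maximality-of-independent-set trick (the clique structure on external private neighbourhoods) that was needed there. The only mild subtlety worth writing carefully is the size accounting—checking that the number of added vertices is at most $|A\cap D|$ so that the union $A\cup D$, augmented by one neighbour per bad vertex in $A\cap D$, still has cardinality at most $|A|+|D|$.
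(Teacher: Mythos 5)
Your proposal is correct and follows essentially the same route as the paper: the authors also take a $\gamma_2(G)$-set $S$ and a $\gamma(G)$-set $D$ and augment $S\cup D$ by at most one neighbour for each vertex of $S\cap D$ lacking a neighbour in the set, which is exactly your construction phrased as a minimum-cardinality superset. Your explicit case analysis and the count $|W'|\leq |S\cup D|+|S\cap D|=|S|+|D|$ just spell out the details the paper compresses into ``by definition.''
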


\begin{proof}
Let $S$ be a $\gamma_2(G)$-set and $D$ a $\gamma(G)$-set. Now, we define  $W'\subseteq V(G)$ as a set of minimum cardinality among all supersets  $W$ of $S\cup D$ such that   $N(x)\cap W\ne \varnothing$ for every $x\in S\cap D$.

By definition, $W'$ is a double dominating set of $G$ and $|W'|\leq |S|+|D|$. Therefore,  $\gamma_{\times 2}(G)\leq |W'|\leq |S|+|D|= \gamma_2(G)+\gamma(G)$, which completes the proof.
\end{proof}

The bound above is tight. For instance, it is achieved by the graph $G$ shown in Figure \ref{Fig3}, where $\gamma(G)=2$, $\gamma_2(G)=4$ and $\gamma_{\times 2}(G)=6$. For a simple example where 
the difference between the bounds given by Theorems \ref{2+gamma} and \ref{Bonono} can be as large as desired, we can take $G\cong K_{1,n-1}$ with $n\ge 3$. In this case, Theorem
 \ref{2+gamma} gives the exact value  $\gamma_{\times 2}(K_{1,n-1})=n$, while Theorem  \ref{Bonono} gives $\gamma_{\times 2}(K_{1,n-1})\le 2n-3$.

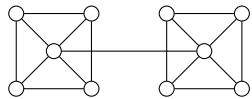
\begin{figure}[ht]
\centering
\begin{tikzpicture}[scale=.5, transform shape]
\node [draw, shape=circle] (c1) at  (0,0) {};
\node [draw, shape=circle] (h1) at  (-1,-1) {};
\node [draw, shape=circle] (h2) at  (1,1) {};
\node [draw, shape=circle] (h3) at  (-1,1) {};
\node [draw, shape=circle] (h4) at  (1,-1) {};
\node [draw, shape=circle] (c2) at  (4,0) {};
\node [draw, shape=circle] (hh1) at  (5,-1) {};
\node [draw, shape=circle] (hh2) at  (3,1) {};
\node [draw, shape=circle] (hh3) at  (5,1) {};
\node [draw, shape=circle] (hh4) at  (3,-1) {};
\draw (c2)--(c1);
\draw (h4)--(c1);
\draw (c1)--(h1);
\draw (c1)--(h2);
\draw (c1)--(h3);
\draw (c2)--(hh1);
\draw (c2)--(hh2);
\draw (hh4)--(c2)--(hh3);
\draw (h1)--(h3)--(h2)--(h4)--(h1);
\draw (hh1)--(hh3)--(hh2)--(hh4)--(hh1);
\end{tikzpicture}
\caption{A graph $G$ with $\gamma_{\times 2}(G)= \gamma_2(G)+\gamma(G)=6<7=2\gamma_2(G)-1$.}\label{Fig3}
\end{figure}

A \emph{total dominating set}, in a graph $G$ with no isolated vertices, 
is a set $S$ of vertices of $G$ such that every vertex of $G$ is adjacent to at least one vertex
in $S$. 
The \emph{total domination number} $\gamma_t(G)$ is defined to be 
$$\gamma_t(G)=\min\{|S|:\, S \text{ is a total dominating set}\}.$$  
The theory of total domination  has been extensively studied. For instance, we cite the book  \cite{Henning2013} by Henning and Yeo.

A graph is \emph{claw-free} if and only if it does not contain the complete bipartite graph $K_{1,3}$ as an induced subgraph. 

\begin{theorem}{\rm \cite{BCF2007}}\label{teo-BCF2007-gamma-t}
For any claw-free graph $G$ with no isolated vertex, 
$$\gamma_{\times 2}(G)\leq \min\{2\gamma_t(G),3\gamma(G)\}.$$
\end{theorem}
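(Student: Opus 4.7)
The plan is to establish the two inequalities $\gamma_{\times 2}(G) \le 2\gamma_t(G)$ and $\gamma_{\times 2}(G) \le 3\gamma(G)$ separately, both via the same recipe: start with an optimal set of the appropriate type and augment it by one or two carefully chosen vertices per element, using claw-freeness to keep the overhead small.

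For the total-domination bound, I would fix a $\gamma_t(G)$-set $T$ and, for every $u \in T$, consider the external private neighbourhood $P_u = \epn(u,T)$. The key structural observation is that each $P_u$ is a clique of $G$: since $T$ is total dominating, $u$ has some neighbour $u' \in T$, and if $v_1,v_2 \in P_u$ were nonadjacent, the four vertices $\{u,u',v_1,v_2\}$ would induce a claw, because $u'$ is adjacent to neither $v_1$ nor $v_2$ by the definition of $P_u$. Choosing, for each $u$ with $P_u \ne \varnothing$, one representative $v_u \in P_u$, the set $W = T \cup \{v_u : P_u \ne \varnothing\}$ has size at most $2|T|$. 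Every $v \in P_u \setminus \{v_u\}$ is then adjacent to both $u$ and $v_u$ by the clique property; any $v \notin T \cup \bigcup_u P_u$ already has at least two $T$-neighbours; and every $v \in T$ has some neighbour in $T \subseteq W$ since $T$ is total dominating. Hence $W$ is a double dominating set.

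For the second bound, I would fix a $\gamma(G)$-set $D$ and again set $P_u = \epn(u,D)$ for $u \in D$. Now claw-freeness forces $\alpha(G[P_u]) \le 2$, since three pairwise nonadjacent vertices of $P_u \subseteq N(u)$ would form a claw with $u$. A standard short argument then gives that any graph $H$ with $\alpha(H) \le 2$ satisfies $\gamma(H) \le 2$: pick any $x \in V(H)$, and if $x$ does not dominate, choose a non-neighbour $y$, whereupon $\{x,y\}$ must dominate $H$ since no third vertex could be independent from both. Applying this to $G[P_u]$ yields a set $A_u \subseteq P_u$ with $|A_u|\le 2$ that dominates $P_u$ in $G$. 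I would then form $W$ from $D$ by adding $A_u$ whenever $P_u \ne \varnothing$, and, in the degenerate case $P_u = \varnothing$ with $u$ having no neighbour in $D$, a single arbitrary neighbour of $u$. This gives $|W| \le 3|D|$, and a routine case check confirms that $W$ is a double dominating set.

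The main obstacle is the second inequality: one must keep the overhead per vertex of $D$ at most two while simultaneously ensuring (i) every $v \in P_u$ receives a second $W$-neighbour and (ii) every $u \in D$ itself acquires a neighbour in $W$. The crucial saving is that $A_u \subseteq N(u)$, so a single construction serves both purposes whenever $P_u \ne \varnothing$; the pathological case $P_u = \varnothing$ with $u$ having no $D$-neighbour is absorbed by exactly one extra vertex, preserving the count.
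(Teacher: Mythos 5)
Your proof is correct. Note, however, that the paper does not actually prove this statement: it is quoted from Blidia, Chellali and Favaron \cite{BCF2007}, and the paper's own contribution is the stronger Theorem \ref{teo-improve-claw-free}, $\gamma_{\times 2}(G)\le \gamma_t(G)+\gamma(G)$, which implies both halves of the minimum since $\gamma(G)\le\gamma_t(G)\le 2\gamma(G)$. Your argument for the $2\gamma_t(G)$ half rests on exactly the same claw-freeness lemma that drives the paper's proof of Theorem \ref{teo-improve-claw-free}: if $u$ has a neighbour $u'$ inside the total dominating set, then $\epn(u,T)$ is a clique, so one representative per private neighbourhood suffices. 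The difference is in the bookkeeping: you pay one extra vertex for \emph{every} $u\in T$, giving $2|T|$, whereas the paper starts from the union $S\cup D$ of a $\gamma_t$-set and a $\gamma$-set and only adds a private neighbour for vertices of $S\cap D$, which brings the count down to $|S|+|D|$. Your $3\gamma(G)$ half is a genuinely separate and self-contained argument (pairwise-nonadjacent triples in $\epn(u,D)\subseteq N(u)$ would form a claw, so $\alpha(G[\epn(u,D)])\le 2$, hence a dominating pair exists), and it is correct, including the careful treatment of the case $\epn(u,D)=\varnothing$ with $N(u)\cap D=\varnothing$; the only steps left implicit are the trivial checks that the added representatives themselves acquire a second closed-neighbourhood witness via $u$. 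In short: a valid direct proof of the cited theorem, slightly more work than deducing it from the paper's Theorem \ref{teo-improve-claw-free}, but self-contained and with a clean second half that the paper never needs to write down.
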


As shown in \cite{BCF2007}, the bound above is tight. Even so, the next result shows that  Theorem \ref{teo-BCF2007-gamma-t} has room for improvement, as $\gamma(G)\le \gamma_t(G)$ and $\gamma_t(G)\le 2 \gamma(G)$.

\begin{theorem}\label{teo-improve-claw-free}
For any claw-free graph $G$ with no isolated vertex,
$$\gamma_{\times 2}(G)\leq \gamma_t(G)+\gamma(G).$$
\end{theorem}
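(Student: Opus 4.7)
My plan is to follow the template used in the proofs of Theorems \ref{teo-alpha-gamma} and \ref{2+gamma}: let $S$ be a $\gamma_t(G)$-set and $D$ a $\gamma(G)$-set, form $T:=S\cup D$, and augment $T$ by a controlled number of vertices to obtain a double dominating set. First I would pinpoint where $T$ fails to double dominate. Because $S$ is a \emph{total} dominating set, every vertex (including those of $T$ itself) has a neighbour in $S\subseteq T$; in particular $G[T]$ has no isolated vertex and every $v\in V(G)\setminus T$ has at least one neighbour in $T$. The only defect is therefore the classical one: some $v\in V(G)\setminus T$ may satisfy $N(v)\cap T=\{u\}$, and a routine check shows that such a $u$ must belong to $S\cap D$, i.e., $v\in \epn(u,T)$ for some $u\in S\cap D$. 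Let $A=\{u\in S\cap D:\epn(u,T)\neq\varnothing\}$.

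The key structural step, where claw-freeness enters, is to prove that for every $u\in A$ the set $\epn(u,T)$ is a clique. Given $v,v'\in \epn(u,T)$, I would use total domination of $S$ to pick a neighbour $u'\in S\cap N(u)\subseteq T$ of $u$; note $u'\neq u$ and $u'\notin\{v,v'\}$ since $v,v'\notin T$. By the defining property of $\epn(u,T)$, neither $v$ nor $v'$ can be adjacent to $u'$, so $\{u',v,v'\}$ would be an independent triple inside $N(u)$ unless $vv'\in E(G)$; claw-freeness rules out the former, forcing $v$ and $v'$ to be adjacent.

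With the clique property in hand, for each $u\in A$ I would choose one vertex $v_u\in \epn(u,T)$ and set $W:=T\cup\{v_u:u\in A\}$. Since $A\subseteq S\cap D$,
\[
|W|\leq |T|+|A|=|S|+|D|-|S\cap D|+|A|\leq |S|+|D|=\gamma_t(G)+\gamma(G).
\]
To check that $W$ is a double dominating set, I would verify that every $v\in T$ has an in-$T$ neighbour (by totality of $S$), each added $v_u$ is adjacent to $u\in T\subseteq W$, and any $v\in V(G)\setminus W$ with a single $T$-neighbour $u$ satisfies $u\in A$ and $v,v_u\in \epn(u,T)$, so the clique property gives $vv_u\in E(G)$ and thus $|N(v)\cap W|\geq 2$. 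The main obstacle is precisely the clique lemma; it would collapse without claw-freeness, and it also genuinely relies on $S$ being total (rather than merely dominating) to supply the witness $u'$ that fills out the potential claw.
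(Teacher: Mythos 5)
Your proposal is correct and follows essentially the same argument as the paper: the paper also takes a $\gamma_t(G)$-set $S$ and a $\gamma(G)$-set $D$, augments $S\cup D$ by one external private neighbour for each vertex of $S\cap D$ that has one, and uses the totality of $S$ to produce the neighbour $u'$ that, together with claw-freeness, forces two private neighbours of $u$ to be adjacent. The only difference is presentational (you isolate the clique property of $\epn(u,S\cup D)$ as an explicit lemma, while the paper argues by contradiction inside the verification), so no further comment is needed.
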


\begin{proof}
Let $S$ be a $\gamma_t(G)$-set and $D$ a $\gamma(G)$-set. As above, given a  vertex $v\in S\cup D$, the external private neighbourhood of $v$ with respect to $S\cup D$  will be denoted by $\epn(v,S\cup D)$.

Now, we define  $W'\subseteq V(G)$ as a set of minimum cardinality among all supersets  $W$ of $S\cup D$ such that  the following property is satisfied.
\begin{itemize}
\item[($\star$)] If $x\in S\cap D$ and $\epn(x, S\cup D)\ne  \varnothing $, then $W\cap \epn(x, S\cup D)\neq \varnothing $.
\end{itemize}

By definition,  $W'$ is a total dominating set of $G$ and $|W'|\leq |S|+|D|$. It remains to show that $|N(v)\cap W'|\geq 2$ for every  $v\in V(G)\setminus W'$. Suppose, to the contrary, that there exists $v\in V(G)\setminus W'$  which is an external private neighbour of $u\in W'$, i.e., $N(v)\cap W'=\{u\}$. Since both $S$ and $D$ are dominating sets,   $u\in S\cap D$. Now, by ($\star$), there exists  $v'\in  W'\cap ( \epn(u,S\cup D)\setminus \{v\})$ and, since $S$ is a total dominating set, there exists   $u'\in N(u)\cap S$.  Hence, if  $G$ is a claw-free graph, then $v$ and $v'$ have to be adjacent vertices, which is a contradiction. Thus, $W'$ is a double dominating set of $G$. Therefore, 
  $\gamma_{\times 2}(G)\leq |W'|\leq |S|+|D|= \gamma_t(G)+\gamma(G)$, which completes the proof.
\end{proof}

The bound above is tight. For instance, it is achieved by any graph of order $n+1$ obtained from a complete graph of order $n\ge 2$ by adding a pendant edge.

To show clear examples where 
the difference between the bounds given by Theorems \ref{teo-improve-claw-free} and \ref{teo-BCF2007-gamma-t} can be as large as desired, we can take the case of path and cycle graphs of order large enough.

\end{document}